\RequirePackage{fix-cm}
\documentclass[pdflatex,sn-mathphys]{sn-jnl}

\usepackage{anyfontsize}
\usepackage{bm}
\usepackage{tikz-cd}
\usepackage{enumerate}
\usepackage{mathabx}

\usepackage{amsmath}
\usepackage{latexsym}
\usepackage{amsthm}
\usepackage{amssymb}
\usepackage{draftcopy}
\usepackage{comment}
\usepackage{mathabx}
\usepackage{matlab-prettifier}
\usepackage[T1]{fontenc}
\usepackage{algorithm}
\usepackage{algorithmicx}
\usepackage{algpseudocode,refcount}
\usepackage{mathtools}
\usepackage{centernot}
\usepackage{stmaryrd}
\SetSymbolFont{stmry}{bold}{U}{stmry}{m}{n}
\usepackage{comment}
\usepackage{caption} 
\usepackage{graphicx}%
\usepackage{multirow}%
\usepackage{amsthm}%
\usepackage{mathrsfs}%
\usepackage[title]{appendix}%
\usepackage{xcolor}%
\usepackage{textcomp}%
\usepackage{manyfoot}%
\usepackage{booktabs}%
\usepackage{algorithm}%
\usepackage{algorithmicx}%
\usepackage{algpseudocode}%
\usepackage{listings}%

\newcommand{\Z}{\mathbb{Z}}

\newcommand{\C}{\mathbb{C}}

\newcommand{\aut}{\text{Aut}}

\newcommand{\paf}{\text{PAF}}

\newcommand{\zl}{\Z_\ell}
\newcommand{\zlx}{\zl^\times}
\newcommand{\sdp}{\zl\rtimes_{\theta}\zlx}
\newcommand{\sdpq}{K\rtimes_{\theta}Q}

\newcommand{\bu}{\mathbf{u}}

\newcommand{\bv}{\mathbf{v}}

\newcommand{\bx}{\mathbf{x}}

\newcommand{\by}{\mathbf{y}}

\newcommand{\bone}{\mathbf{1}}
\newcommand{\bzero}{\mathbf{0}}

\newcommand{\cl}{\C^{\zl}}

\newtheorem{theorem}{Proposition}[section]

\newtheorem{definition}{Definition}[section]

\newtheorem{corollary}[theorem]{Corollary}
\newtheorem{lemma}[theorem]{Lemma}

\numberwithin{equation}{section}

\begin{document}

\title[Article Title]{Determining the group that sends each Legendre pair to an equivalent Legendre pair}

\author*[1]{\fnm{Dursun} \sur{Bulutoglu}}\email{dursun.bulutoglu@gmail.com}
\author[2]{\fnm{Daniel} \sur{Baczkowski}}\email{baczkowski@findlay.edu}
\author[1]{\fnm{Joshua} \sur{Yauney}}\email{joycreatedme@gmail.com}
\affil[1]{\orgdiv{Mathematics \& Statistics}, \orgname{Air Force Institute of Technology}, \orgaddress{\street{2950 Hobson Way}, \city{WPAFB}, \postcode{45433}, \state{OH}, \country{USA}}}
\affil[1]{\orgdiv{Mathematics \& Statistics}, \orgname{University of Findlay}, \orgaddress{\street{1000 N. Main St.}, \city{Findlay}, \postcode{45840}, \state{OH}, \country{USA}}}

\abstract{In this paper we determine the structure of the group of all operations that send each Legendre pair to an equivalent Legendre pair.}

\keywords {Cyclic shift; Decimation; Semidirect product; }

\pacs[MSC Classification]{05E18 20D40 20E22}

\maketitle

\section{Introduction}\label{sec1}

Let \(\bv\) be a vector indexed by \(\zl\). Then \(\paf(\bv,j)=\sum_{i\in \zl}v_iv_{i-j}\) for all \(j\in\zl\) is called the \emph{periodic autocorrelation function} (PAF) of $\bv$.
The concept of a Legendre pair  of length $\ell$ was first introduced in~\cite{fgs01} as a combinatorial object to construct
 Hadamard matrices of order $2\ell+2$ for each odd $\ell$.
\begin{definition}\label{defn:orig}
Two vectors, \(\bu\)~and~\(\bv\), in \(\{-1,1\}^{\zl}\) constitute a \emph{Legendre pair} (LP) of length $\ell$ if
$\bone^\top\bu=\bone^\top\bv$ and
\begin{equation}
\paf(\bu,j)+\paf(\bv,j)=-2\quad\forall\,j\in\zl\setminus\{0\}.\label{eqn:lppsdd}
\end{equation}
\end{definition}

First, we  fix some notation and basic facts that are adopted throughout the paper. Let $\bone$ be an all $1$s column vector of length $\ell$, $\bzero$ be an all $0$s column vector of length $\ell$, \(\zlx\) be the multiplicative group of \(\zl\).
 For any set $S \subseteq \mathbb{C}$, $S^{\zl}$ is the set of all $S$-valued vectors indexed by $\zl$. 
 We fix  \(\bu,\bv \in \mathbb{C}^{\zl}\), where \(\bu = [u_0,u_1,\ldots,u_{\ell-1}]^{\top}, \bv = [v_0,v_1,\ldots,v_{\ell-1}]^{\top}\). We use non-boldface letters for entries of vectors and matrices, lowercase boldface
 letters for vectors, and uppercase  boldface letters for matrices. All vectors are column vectors.
 For a set $A$ with
elements from a
group $G$, 
 $\langle a \mid a \in A \rangle$ is the group generated by elements of $A$. 
 We say that a set $S_1$ is smaller than $S_2$ if $S_1 \subset S_2$. 
 For a group $G$ acting on a set $S$ and $x \in S$, $G\,x$ is the orbit of $x$ under the action of $G$. 
 For two groups $G_1\leq G_2$ means $G_1$ is a subgroup of $G_2$.
 For  subgroups $K_1,K_2,\ldots,K_r$ of a group $G$, $\langle K_1, K_2,\ldots,K_r\rangle $ is the group generated by the elements of $K_1,K_2,\ldots,K_r$, i.e., the smallest subgroup of $G$ containing each  $K_i$ for $i\in \{1,\ldots,r\}$. 
 For subgroups $G_1,G_2,G_3$ of a group $G$, we have  $\langle G_1,  G_2,G_3 \rangle =\langle G_1, \langle G_2,G_3\rangle \rangle$. To see this, observe $\langle G_1,  G_2,G_3 \rangle \leq \langle G_1, \langle G_2,G_3\rangle \rangle$
 because  $\langle G_1, \langle G_2,G_3\rangle \rangle$ is a group containing $G_1,G_2$, and $G_3$. On the other hand, 
 $G_1 \leq \langle G_1,  G_2,G_3 \rangle $ and $\langle G_2,G_3\rangle  \leq \langle G_1,  G_2,G_3 \rangle $ as  $\langle G_1,  G_2,G_3 \rangle$ is a group containing $G_1$, $G_2$, and $G_3$ so it must contain the smallest group containing $G_2$ and $G_3$. Hence,  $\langle G_1, \langle G_2,G_3\rangle \rangle\leq \langle G_1,  G_2,G_3 \rangle$.

For $j \in \zl$, let $c_j:\mathbb{C}^{\zl}\rightarrow \mathbb{C}^{\zl}$ be the linear map such that $$\big(c_j(\bv)\big)_i=v_{i-j}$$ for each \(i\in\zl\). Then $c_j$ is called the  
\emph{cyclic shift by $j$} map or simply a \emph{cyclic shift}, and $c_j(\bv)$ is the cyclic shift of $\bv$ by $j \in \zl$.

 For a vector  \(\bv\) indexed by \(\zl\),
 a \emph{decimation}  of \(\bv\) by \(k\in\zlx\)  is the transformation  \(d_k:\mathbb{C}^{\zl} \rightarrow \mathbb{C}^{\zl}\) such that 
 \begin{equation}\label{eqn:decimate}
 \big(d_k(\bv)\big)_i=v_{ik^{-1}}
 \end{equation}
 for each \(i\in\zl\). 

We can compose cyclic shifts with decimations and get
\begin{equation}\label{eqn:acil}
   ((c_a\circ d_b) (\bv))_i=c_a(d_b(\bv))_i=v_{(i-a)b^{-1}} 
\end{equation}
for each $a\in \zl$, $b \in \zlx$, and $\bv \in \mathbb{C}^{\zl}$.
Let $D$ be the group generated by cyclic shifts $c_i$ for $i \in \zl$
and decimations $d_k$ for $k \in \zlx $, where the group operation is function composition. In Section~\ref{sec:equivalence}, we will show that $D$ acts on $\cl$.
The orbit of $\bu$ in $\cl$ under the action of 
$D$  is called the {\em decimation class} of $\bu$. Two vectors in the same decimation class
are called {\em equivalent}. 
(Being in the same orbit under the action of a group on a set is an equivalence relation on that set.)

It is clear from Definition~\ref{defn:orig} that if $(\bu,\bv)$ is an LP, then $(\bv,\bu)$ is also an LP. Then, 
for a pair of ordered vectors \((\bu,\bv)\), let $s$ be the {\em switch} operator defined by
$$s(\bu,\bv)=(\bv,\bu).$$

By using cyclic shifts, decimations, and the switch operator, we now define the equivalence concept for pairs of vectors in $\mathbb{C}^{\ell}$ so that the validity of equation~(\ref{eqn:lppsdd}) is preserved among equivalent pairs. 
Two pairs of vectors \((\bu,\bv)\) and \((\bu',\bv')\) in $\mathbb{C}^{\zl}\times \mathbb{C}^{\zl}$
 are called \emph{equivalent} if
\begin{equation}\label{eqn:equiv1}
(\bu',\bv')=\bigg(s^f\Big(d_k\big(c_i(\bu)\big),d_{k}\big(c_j(\bv)\big)\Big)\bigg)
\end{equation}
or 
\begin{equation}\label{eqn:equiv2}
(\bu',\bv')=\bigg(s^f\Big(d_k\big(c_i(\bu)\big),d_{-k}\big(c_j(\bv)\big)\Big)\bigg)
\end{equation}
for some \(k\in\zlx\), \(i,j\in\zl\), and \(f\in\{0,1\}\). 
In Section~\ref{sec:equivalence}, we will show that the equivalence defined on $\cl\times \cl$ is indeed an equivalence relation on $\cl\times \cl$.
It is well known that if
$(\bu,\bv)$ is an LP and $(\bu',\bv')$ is equivalent to $(\bu,\bv)$, then 
$(\bu',\bv')$ is also an LP~\cite{abh20}; for a proof, see~\cite{yauney24}. Because of this fact, the set of all LPs of length $\ell$ can be generated from a set of non-equivalent LPs by using equations~(\ref{eqn:equiv1}) and~(\ref{eqn:equiv2}). Hence, it suffices to find a set of all non-equivalent LPs of length $\ell$ to find all LPs of length $\ell$. Also, since the number of all non-equivalent LPs of length $\ell$ tends to be much smaller than the number of all LPs of length $\ell$~\cite{fgs01}, keeping only a set of all  non-equivalent LPs drastically decreases the disk space requirements.

One way an exhaustive search algorithm for LPs can be expedited is by narrowing its search space.
However, the main risk of narrowing the search space is that no LP of length $\ell$ exists in the narrowed space while an LP of length $\ell$ exists. However, if this narrowing is designed based on the equivalence of LPs in such a way that the narrowed search space contains at least
a set of all non-equivalent LPs, then the exhaustive search algorithm will find an LP of length $\ell$ if it exists. This is true because equation~(\ref{eqn:lppsdd})  is preserved among equivalent pairs of vectors in $\mathbb{C}^{\zl}\times \mathbb{C}^{\zl}$. Then, restricting the search to finding a set of LPs that includes all non-equivalent LPs  is a viable way to accomplish computational savings. 
Hence, 
the concept of equivalence of LPs plays a key role in designing faster LP finding methods by narrowing the search space.

The concept of equivalence is based on  decimations and cyclic shifts. So, we first study the group $D$ generated by all cyclic shifts and decimations.
Each element of the group $D$ generated by all cyclic shifts and decimations acts on each vector $\bv$ in $\mathbb{C}^{\zl}$ by permuting its entries. The orbits of this action constitute the decimation classes.
In Section~\ref{sec:equivalence}, we will first show that $D$ indeed acts on $\mathbb{C}^{\zl}$, 
and determine the structure of $D$. Then, we are going to construct the group $GG$ that sends each $(\bu,\bv)$ 
to another pair $(\bu',\bv')$ that is equivalent to it,  determine the structure of $GG$, and show that $GG$ indeed acts on vectors in $\mathbb{C}^{\zl}\times \mathbb{C}^{\zl}$.
The structure of $GG$ has never been studied before.
However, 
the fact that that $GG$ acts on LPs of length $\ell$ has been routinely stated or used by researchers in the field~\cite{fgs01,kkbatr23,arq,kk21}. 

\section{Determining the structure of $GG$}\label{sec:equivalence}
We first define the semidirect product of two groups that plays an essential
role in the study of equivalence of LPs.
For a given group $H$, let $\aut(H)$ be its group of all automorphisms.
Given two groups $H$, $K$ and a homomorphism $\theta: K \rightarrow  \aut(H)$, we define their semidirect product group $H\rtimes_\theta K$ to be the set of all ordered pairs $(h,k) \in H \times K$
with the group operation $(h_1,k_1)(h_2,k_2)=(h_1\theta_{k_1}(h_2),k_1k_2)$, i.e., $\theta$
takes $k \in K$
to the $H$-automorphism 
$\theta_{k}\in \aut(H)$~\cite{rotmanintro}. 
It is shown in~\cite{rotmanintro} that $H\rtimes_\theta K$ indeed satisfies the group axioms.
The following lemma follows from the 
definition of semidirect products and will be needed in the study of the equivalence of LPs.
\begin{lemma}\label{lem:generate}
Let $H$ be a group with the identity 
element $e_1$, and $K$ be a group with the
identity element $e_2$. 
 Let $S_H=\{h_1,h_2,\ldots, h_{\gamma_1}\}$  and  $S_K=\{k_1,k_2,\ldots, k_{\gamma_2}\}$ 
 be a set of generators for the groups $H$ and $K$, respectively. Let $\theta$ be a homomorphism $\theta: K \rightarrow  \aut(H)$.
 Then the elements $(h_1,e_2),(h_2,e_2),\ldots,  (h_{\gamma_1},e_2),
 (e_1,k_1),(e_1,$ $k_2),\ldots,(e_1,k_{\gamma_2})$ generate $H\rtimes_\theta K$.
\end{lemma}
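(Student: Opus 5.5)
The plan is to show that every element $(h,k)\in H\rtimes_\theta K$ can be written as a product of the listed elements and their inverses. The key identity is the factorization
\begin{equation*}
(h,k)=(h,e_2)(e_1,k),
\end{equation*}
which follows directly from the multiplication rule: $(h,e_2)(e_1,k)=(h\,\theta_{e_2}(e_1),e_2k)=(h,k)$. Here we use that $\theta_{e_2}$ is the identity automorphism, because $\theta$ is a homomorphism, and that $\theta_{e_2}(e_1)=e_1$, because any automorphism fixes the identity. So it suffices to show that every element of the form $(h,e_2)$ and every element of the form $(e_1,k)$ lies in the subgroup $L$ generated by the listed elements.

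For this I would check that the maps $\iota_H:H\to H\rtimes_\theta K$, $h\mapsto (h,e_2)$, and $\iota_K:K\to H\rtimes_\theta K$, $k\mapsto(e_1,k)$, are group homomorphisms.
\begin{itemize}
\item For $\iota_H$: $(h,e_2)(h',e_2)=(h\,\theta_{e_2}(h'),e_2)=(hh',e_2)$.
\item For $\iota_K$: $(e_1,k)(e_1,k')=(e_1\,\theta_k(e_1),kk')=(e_1,kk')$, again because $\theta_k$ fixes $e_1$.
\end{itemize}
Now write $h\in H$ as a word $h=h_{i_1}^{\epsilon_1}\cdots h_{i_m}^{\epsilon_m}$ with $\epsilon_t=\pm1$, which is possible since $S_H$ generates $H$. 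Applying the homomorphism $\iota_H$ gives $(h,e_2)=(h_{i_1},e_2)^{\epsilon_1}\cdots(h_{i_m},e_2)^{\epsilon_m}$, since homomorphisms preserve inverses. Hence $(h,e_2)\in L$. The same argument applied to $k$ and $\iota_K$ gives $(e_1,k)\in L$.

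Combining these with the factorization gives $(h,k)\in L$ for every $(h,k)$, so $L=H\rtimes_\theta K$. Conversely, $L$ is contained in $H\rtimes_\theta K$ trivially.

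There is no real obstacle here. The only point needing care is justifying that $\theta_{e_2}=\mathrm{id}_H$ and $\theta_k(e_1)=e_1$, both standard facts about homomorphisms into $\aut(H)$. One should also remember that group generation allows inverses, which is handled by the homomorphism property.
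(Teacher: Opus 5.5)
Your proof is correct and follows essentially the same route as the paper. Both use $\theta_{e_2}=\mathrm{id}_H$ and $\theta_k(e_1)=e_1$ to get $(h,e_2)(h',e_2)=(hh',e_2)$, $(e_1,k)(e_1,k')=(e_1,kk')$ and $(h,k)=(h,e_2)(e_1,k)$, and then expand $h$ and $k$ as words in the generators. Your version is slightly more careful in allowing inverses in those words via the homomorphism property of $\iota_H,\iota_K$, whereas the paper writes $h$ and $k$ as products of generators without inverses, which suffices for the finite groups used later in the paper.
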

\begin{proof}
Since $\theta$ is a homomorphism, $\theta_{e_2}$ is the identity automorphism in $\aut(H)$, and 
  $\theta_x \in \aut(H)$ implies $\theta_x(e_1)=e_1$. 
    Observe that for $a,b \in H$ and $x,y \in K$, 
  \begin{align}\label{eqn:gensdp}
  \begin{split}
      (a,e_2)(b,e_2)&=(a\theta_{e_2}(b),e_2e_2)=(ab,e_2),\\
  (e_1,x)(e_1,y)&=(e_1\theta_x(e_1),xy)=(e_1,xy),\\
   (h,e_2)(e_1,k)&=(h\theta_{e_2}(e_1),ke_2)=(he_1,ke_2)=(h,k).
   \end{split}
  \end{align}  
  For arbitrary $h \in H$ and $k \in K$, there exist 
  $h_{i_1},\ldots, h_{i_{r_1}} \in S_H$ and
  $k_{j_1},\ldots, k_{j_{r_2}} \in S_K$ such that $h= h_{i_1}\cdots h_{i_{r_1}}$ and $k=k_{j_1}\cdots k_{j_{r_2}}$. Then, by equations~(\ref{eqn:gensdp}),
 \begin{align*}
 (h_{i_1},e_2)(h_{i_2},e_2)\cdots  (h_{i_{r_1}},e_2)(e_1,k_{j_1})(e_1,k_{j_2})\cdots(e_1,k_{j_{r_2}})
 &=(h_{i_1}\cdots h_{i_{r_1}}, k_{j_1}\cdots k_{j_{r_2}})\\
 &=(h,k).
 \end{align*} 
\end{proof}

To prove that decimation classes are indeed equivalence classes, our next goal is to show that 
  the group $D$ generated by $d_k$'s for $k\in\zlx$ and $c_j$'s for \(j\in\zl\) with group operation function composition acts on $\mathbb{C}^{\zl}$. To accomplish this goal, we will first show that
  \(\sdp\) acts 
 on $\mathbb{C}^{\zl}$, where $\theta_k(i)=ki$
  for $k \in \zlx$ and $i \in \zl$,
 i.e., $\theta$ is the homomorphism that takes $k \in \zlx$ to the $\zl$-automorphism $i\xmapsto{\sigma_k} ki$ for each $i \in \zl$.
 Now, $\theta:\zlx\rightarrow \aut(\zl)$ is indeed a homomorphism because for each $k_1,k_2 \in \zlx$, $$\theta(k_1)\circ\theta(k_2)(i)=\sigma_{k_1}\circ\sigma_{k_2}(i)=k_1k_2i=\sigma_{k_1k_2}(i)=\theta(k_1k_2)(i)$$ for all $i \in \zl$. Hence, 

$$\theta(k_1k_2)=\sigma_{k_1k_2}=\theta(k_1)\circ\theta(k_2)=\sigma_{k_1}\circ\sigma_{k_2}.$$
Let $id$ be the identity automorphism of $\zl$. Then, $\sigma_k$ for each $k \in \zlx$ is an invertible map from $\zl$ to $\zl$ because $\sigma_k(i)\in \zl$, and 
$$\sigma_k\circ\sigma_{k^{-1}}=\sigma_{k^{-1}}\circ\sigma_k=id,$$  implying $\sigma_k^{-1}=\sigma_{k^{-1}}$.
Moreover, the homomorphism property of $\sigma_k$ follows because 
$\sigma_k(i_1+i_2)=ki_1+ki_2=\sigma_k(i_1)+\sigma_k(i_2)$ for each $i_1,i_2 \in \zl$. Hence, $\sigma_k$ is a  $\zl$-automorphism. 
For the remainder of the paper, unless otherwise stated, let  $\theta:\zlx\rightarrow \aut(\zl)$ be defined as above.
 
 Proving the following lemma is the first step in showing that  \(\sdp\) acts 
 on $\mathbb{C}^{\zl}$.
 \begin{lemma}\label{lem:acts}
 The group \(\sdp\) acts on  \(\zl\) by
\[(a,b)g=a+bg\] 
for \((a,b)\in\sdp\) and \(g\in\zl\). 
\end{lemma}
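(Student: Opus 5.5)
To prove Lemma~\ref{lem:acts}, I will verify the two axioms of a (left) group action directly from the definition of the semidirect product $\sdp$ with $\theta_k(i)=ki$. First, the map is well defined: for $(a,b)\in\sdp$ and $g\in\zl$, the element $a+bg$ lies in $\zl$, since $b\in\zlx\subseteq\zl$ and $\zl$ is closed under addition and multiplication. The identity element of $\sdp$ is $(0,1)$, because $0$ is the identity of the additive group $\zl$ and $1$ is the identity of $\zlx$. It then acts trivially: $(0,1)g=0+1\cdot g=g$ for every $g\in\zl$.

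The main step is compatibility with the group operation, namely
\[
\big((a_1,b_1)(a_2,b_2)\big)g=(a_1,b_1)\big((a_2,b_2)g\big)
\]
for all $(a_1,b_1),(a_2,b_2)\in\sdp$ and $g\in\zl$. The product in $\sdp$ is written additively in the first coordinate and multiplicatively in the second, so
\[
(a_1,b_1)(a_2,b_2)=(a_1+\theta_{b_1}(a_2),b_1b_2)=(a_1+b_1a_2,b_1b_2).
\]
Hence the left-hand side equals $a_1+b_1a_2+b_1b_2g$. The right-hand side equals $(a_1,b_1)(a_2+b_2g)=a_1+b_1(a_2+b_2g)=a_1+b_1a_2+b_1b_2g$, using distributivity and associativity in the ring $\zl$. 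The two sides agree.

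I expect no real obstacle here. The only point needing care is translating the abstract product $(h_1\theta_{k_1}(h_2),k_1k_2)$ into the additive notation of $\zl$ with $\theta_{b_1}(a_2)=b_1a_2$; once that is written down, the rest is ring arithmetic in $\zl$.
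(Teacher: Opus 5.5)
Your proposal is correct and takes essentially the same route as the paper: you check that the identity $(0,1)$ acts trivially, and that both $(a_1,b_1)\big((a_2,b_2)g\big)$ and $\big((a_1,b_1)(a_2,b_2)\big)g$ reduce to $a_1+b_1a_2+b_1b_2g$ via $\theta_{b_1}(a_2)=b_1a_2$. Your added remark that $a+bg\in\zl$ is a harmless detail the paper leaves implicit.
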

\begin{proof}
 The identity of     \(\sdp\) is $(0,1)$, and 
$(0,1)g=0+g=g$ for each $g \in \zl$.
Moreover for $(a_1,b_1), (a_2,b_2) \in  \sdp$, 
$$(a_1,b_1)((a_2,b_2)g)=(a_1,b_1)(a_2+b_2g)=
a_1+b_1a_2+b_1b_2g,$$
and 
\begin{equation*}\label{eqn:multiplicationgen} 
((a_1,b_1)(a_2,b_2))g=(a_1+\theta_{b_1}(a_2), b_1b_2)g=
a_1+b_1a_2+b_1b_2g.
\end{equation*}
Hence, $(a_1,b_1)((a_2,b_2)g)=  ((a_1,b_1)(a_2,b_2))g$, and  \(\sdp\) acts on  \(\zl\).
\end{proof}
The following lemma shows that the action of  \(\sdp\) on \(\zl\) induces an action
 of   \(\sdp\) on vectors in $\mathbb{C}^{\zl}$ by taking $\psi$ to be the identity isomorphism.
 \begin{lemma}\label{lem:Danc}
 Let $\psi$ be an isomorphism $\psi:\sdp\rightarrow T$ from $\sdp$ to a group $T$.
Then the action of  \(\sdp\) on \(\zl\) induces an action
 of   $T$ on vectors in $\mathbb{C}^{\zl}$, 
 where for each $t\in T$ such that $(a,b)=\psi^{-1}(t) \in \sdp$, $\bv \in \cl$, $g \in \zl$ we have
 \begin{equation*}\label{eqn:genact}
 t(\bv)_g=\psi^{-1}(t)(\bv)_g =(a,b)(\bv)_g =v_{(a,b)^{-1}g}= v_{{{(g-a)b^{-1}}}}=((c_a \circ d_b)(\bv))_g
 \end{equation*}
  and 
 $$ 
t\bv =\psi^{-1}(t)(\bv)= (a,b)\bv =(c_a \circ d_b)(\bv).
 $$
 \end{lemma}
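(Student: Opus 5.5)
We will use the standard fact that an action of a group on a set induces an action on functions from that set. Here a vector in $\cl$ is simply a function $\zl\to\C$, $g\mapsto v_g$. For $(a,b)\in\sdp$ we define
\[
((a,b)\bv)_g=v_{(a,b)^{-1}g},
\]
where $(a,b)^{-1}g$ is given by the action of Lemma~\ref{lem:acts}. For $t\in T$ we then set $t\bv=\psi^{-1}(t)\bv$. The plan has three steps: check that this is an action of $\sdp$, transport it to $T$ along $\psi$, and identify the formula with $c_a\circ d_b$.

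\textbf{Step 1 (action axioms for $\sdp$).} The identity $(0,1)$ acts trivially on $\zl$ by Lemma~\ref{lem:acts}, so $((0,1)\bv)_g=v_g$. For compatibility, let $x,y\in\sdp$. Then
\[
(x(y\bv))_g=(y\bv)_{x^{-1}g}=v_{y^{-1}(x^{-1}g)}=v_{(y^{-1}x^{-1})g}=v_{(xy)^{-1}g}=((xy)\bv)_g .
\]
The third equality uses the compatibility axiom proved in Lemma~\ref{lem:acts}. Using the inverse in the index is exactly what makes this a left action rather than a right action. This step is the only point needing care, and once the inverse is in place it is immediate.

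\textbf{Step 2 (transport to $T$).} Since $\psi$ is an isomorphism, $\psi^{-1}$ is a homomorphism. Hence $\psi^{-1}(e_T)=(0,1)$ and $\psi^{-1}(t_1t_2)=\psi^{-1}(t_1)\psi^{-1}(t_2)$. The two action axioms for $T$ therefore follow directly from Step 1.

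\textbf{Step 3 (explicit formula).} From the group law, $(a,b)(-b^{-1}a,b^{-1})=(a+b(-b^{-1}a),1)=(0,1)$, so
\[
(a,b)^{-1}=(-b^{-1}a,\,b^{-1}).
\]
By Lemma~\ref{lem:acts}, $(a,b)^{-1}g=-b^{-1}a+b^{-1}g=(g-a)b^{-1}$. Comparing with equation~(\ref{eqn:acil}) gives $v_{(g-a)b^{-1}}=((c_a\circ d_b)(\bv))_g$ for every $g\in\zl$. Equality of the vectors $t\bv=(c_a\circ d_b)(\bv)$ then follows coordinatewise, which establishes all the displayed identities.
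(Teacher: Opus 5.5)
Your proof is correct and follows essentially the same route as the paper: define $(x\bv)_g=v_{x^{-1}g}$, check the two action axioms, pull the action back to $T$ along the homomorphism $\psi^{-1}$, and match the index $(g-a)b^{-1}$ with equation~(\ref{eqn:acil}). Your compatibility chain $v_{y^{-1}(x^{-1}g)}=v_{(xy)^{-1}g}$ has the inverses in the correct order, whereas the paper's displayed chain writes $v_{(a_1,b_1)^{-1}((a_2,b_2)^{-1}g)}$ and swaps them; your explicit computation $(a,b)^{-1}=(-b^{-1}a,b^{-1})$ also supplies a step the paper leaves implicit.
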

 \begin{proof}
To prove that there is an induced action of $T$  on $\cl$, let $t_1,t_2 \in T$ be such that
 $(a_1,b_1)=\psi^{-1}(t_1)$ and $ (a_2,b_2)=\psi^{-1}(t_2) \in  \sdp$. Then 
 \begin{align*}\label{eqn:actionzl}
 \begin{split}
(t_1(t_2\bv))_g=((a_1,b_1)((a_2,b_2)\bv))_g =&\,\,   (a_2,b_2)(\bv_{(a_1,b_1)^{-1}g})\\=&\,\,v_{(a_1,b_1)^{-1}((a_2,b_2)^{-1}g)}=
 v_{((a_1,b_1)^{-1}(a_2,b_2)^{-1})g}\\=&\,\,((a_1,b_1)(a_2,b_2))\bv_g\\=&\,\,
 (\psi^{-1}(t_1)\psi^{-1}(t_2))\bv_g\\=&\,\,
  (\psi^{-1}(t_1t_2))\bv_g\\=&\,\,
  (t_1t_2)\bv_g
 \end{split}
 \end{align*}
 implying 
 \begin{equation*}\label{eqn:actsonv}
(t_1(t_2\bv))= (a_1,b_1)((a_2,b_2)\bv)=((a_1,b_1)(a_2,b_2))\bv=(t_1t_2)\bv.
 \end{equation*}
 Moreover, if $e$ is the identity element of $T$, then 
 $$ e(\bv)_g=\psi^{-1}(e)(\bv)_g=(0,1)(\bv)_g=v_{(g-0)1}=v_g \implies e(\bv)=(0,1)(\bv)=\bv.$$
 \end{proof}
 Now, we derive some identities that we are going to use to analyze the structure of the group $D$ generated by cyclic shifts and decimations whose group operation is function composition.
 By equation~(\ref{eqn:acil}), for each $i \in \zl$, $(a_1,b_1), (a_2,b_2) \in  \zl \times \zlx$ and $\bv \in \mathbb{C}^{\zl}$ we get  
 \begin{align*}
 \begin{split}
 ((c_{a_1} \circ d_{b_1})\circ(c_{a_2} \circ d_{b_2})(\bv))_i=& v_{((i-a_1)b_1^{-1}-a_2)b_2^{-1}}=v_{(i-a_1)b_1^{-1}b_2^{-1}-a_2b_2^{-1}}\\=& \,\,v_{((i-(a_1+b_1a_2))(b_1b_2)^{-1}}
=((c_{a_1+b_1a_2} \circ d_{b_1b_2})(\bv))_i.
\end{split}
\end{align*}
Hence,  for each $(a_1,b_1), (a_2,b_2) \in  \zl \times \zlx$ and $\bv \in \mathbb{C}^{\zl}$,
\begin{equation}\label{eqn:hom}
((c_{a_1} \circ d_{b_1})\circ(c_{a_2} \circ d_{b_2}))(\bv)= (c_{a_1+b_1a_2} \circ d_{b_1b_2})(\bv).
\end{equation}
Observe that for each $\bv \in \cl$ and $i \in \zl$,
\begin{equation*}\label{eqn:addc0}
((c_{a_1}\circ c_{a_2})(\bv))_i=v_{i-a_1-a_2}=c_{a_1+a_2}(\bv)_i
\text{ for all } a_1,a_2 \in \zl
\end{equation*}
and 
\begin{equation*}\label{eqn:multd0}
((d_{k_1}\circ d_{k_2})(\bv))_i=v_{i(k_1)^{-1}(k_2)^{-1}}=d_{k_1k_2}(\bv)_i
\text{ for all } k_1,k_2 \in \zlx.
\end{equation*}
Hence,  
\begin{equation}\label{eqn:addc}
c_{a_1}\circ c_{a_2}=c_{a_1+a_2}
\text{ for all } a_1,a_2 \in \zl,
\end{equation}
\begin{equation}\label{eqn:multd}
d_{k_1}\circ d_{k_2}=d_{k_1k_2}
\text{ for all } k_1,k_2 \in \zlx,
\end{equation}
and
\begin{equation}\label{eqn:inverse}
(c_a)^{-1}=c_{-a} \text{ and } (d_k)^{-1}=d_{k^{-1}}.
\end{equation}

The following proposition shows that the group  generated by all decimations and cyclic shifts is
isomorphic to \(\sdp\).
\begin{theorem}\label{thm:isomorphic}
Let $D$ be the group generated by all decimations $d_k$ and cyclic shifts $c_j$ for $k \in \zlx$
and $j \in \zl$, where the group operation is function composition.
Then 
\begin{itemize}
\item [(i)] $D\cong \sdp$;  
\item [(ii)] $\langle (i,1) \mid (i,1)\in \sdp, i \in \zl \rangle
\cong  \langle c_i \mid i \in \zl
\rangle \cong \zl;$
\item [(iii)]
$\langle (0, k) \mid (0, k) \in \sdp, k\in \zlx \rangle\cong \langle d_k \mid k\in \zlx \rangle \cong \zlx.$
\end{itemize}
\end{theorem}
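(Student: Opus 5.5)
The plan is to construct an explicit isomorphism $\phi:\sdp\rightarrow D$ given by $\phi(a,b)=c_a\circ d_b$, and then read off parts (ii) and (iii) as restrictions of $\phi$ to the two natural subgroups.

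\textbf{Homomorphism.} For $(a_1,b_1),(a_2,b_2)\in\sdp$ the product is $(a_1+b_1a_2,b_1b_2)$, since $\theta_{b_1}(a_2)=b_1a_2$. Equation~(\ref{eqn:hom}) gives
\[
\phi(a_1,b_1)\circ\phi(a_2,b_2)=c_{a_1+b_1a_2}\circ d_{b_1b_2}=\phi\big((a_1,b_1)(a_2,b_2)\big),
\]
so $\phi$ is a homomorphism into the group of bijections of $\cl$.

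\textbf{Image equals $D$.} Each $c_a\circ d_b$ is a composition of generators of $D$, so the image lies in $D$. Conversely, the image is a subgroup, being the image of a homomorphism. It contains every generator, because $c_j=\phi(j,1)$ (as $d_1$ is the identity) and $d_k=\phi(0,k)$ (as $c_0$ is the identity). Since $D$ is the smallest group containing these generators, $\phi(\sdp)=D$. This also shows that every element of $D$ has the form $c_a\circ d_b$.

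\textbf{Injectivity.} This is the only step needing care, and it needs $\ell\ge 2$ to distinguish maps. Suppose $c_a\circ d_b$ is the identity on $\cl$. Then $v_{(i-a)b^{-1}}=v_i$ for all $\bv$ and all $i$. Choose $\bv=\mathbf{e}_0$, the standard basis vector supported at index $0$. Comparing supports gives $(i-a)b^{-1}=0$ exactly when $i=0$, so $a=0$. Next choose $\bv=\mathbf{e}_1$: the entry $v_{ib^{-1}}$ is nonzero only when $ib^{-1}=1$, i.e. $i=b$. This must coincide with $i=1$, so $b=1$. Hence the kernel is trivial and (i) holds. For $\ell=1$ both groups are trivial and the claim is immediate.

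\textbf{Parts (ii) and (iii).} Restrict $\phi$ to the two subgroups.
\begin{itemize}
\item The map $i\mapsto(i,1)$ is an isomorphism from $\zl$ onto $\{(i,1)\}$, using the first identity of~(\ref{eqn:gensdp}) and $(i,1)=(j,1)\iff i=j$. This set is therefore already a group and equals the subgroup it generates. Since $\phi$ is injective, $\phi$ carries it isomorphically onto $\langle c_i\mid i\in\zl\rangle=\{c_i\}$, which is consistent with~(\ref{eqn:addc}).
\item Similarly, $k\mapsto(0,k)$ is an isomorphism from $\zlx$ onto $\{(0,k)\}$ by the second identity of~(\ref{eqn:gensdp}). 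Applying $\phi$ gives $\langle d_k\mid k\in\zlx\rangle=\{d_k\}\cong\zlx$, consistent with~(\ref{eqn:multd}).
\end{itemize}

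\textbf{Action.} Finally, the action of $D$ on $\cl$ follows from Lemma~\ref{lem:Danc} with $T=D$ and $\psi=\phi$. Because $\phi$ is an isomorphism, $\phi^{-1}(t)=(a,b)$ acts as $c_a\circ d_b=t$, so the induced action is just function evaluation.
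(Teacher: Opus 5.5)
Your proposal is correct and follows essentially the same route as the paper. You use the same map $(a,b)\mapsto c_a\circ d_b$, obtain the homomorphism property from equation~(\ref{eqn:hom}), get surjectivity from the generators, prove injectivity by testing on specific vectors, and derive (ii)--(iii) by restricting the isomorphism to the two natural subgroups. Your kernel argument with $\mathbf{e}_0,\mathbf{e}_1$ and your observation that the image is a subgroup containing all generators are slightly tidier than the paper's steps, which compare two preimages, evaluate at index $0$, and then simply assert that $d_1$ is the only decimation fixing every vector.
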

\begin{proof}
To prove (i), for $(a,b)\in \sdp$ let $\psi: \sdp \rightarrow D$ be such that
$$\psi(a,b)=c_a\circ d_b.$$
Then for $(a_1,b_1), (a_2,b_2) \in  \sdp$, by equation~(\ref{eqn:hom})
\begin{align*}
\begin{split}
\psi(a_1,b_1)\circ\psi(a_2,b_2)(\bv)=&\,\,(c_{a_1}\circ d_{b_1})\circ (c_{a_2}\circ d_{b_2})(\bv)\\=&\,\,(c_{a_1+b_1a_2} \circ d_{b_1b_2})(\bv)\\
=&\,\, \psi((a_1+b_1a_2,b_1b_2))\bv\\
=&\,\, \psi((a_1,b_1)(a_2,b_2))\bv
\end{split}
\end{align*}
for all $\bv \in \cl$.
Hence,
$$\psi(a_1,b_1)\circ \psi(a_2,b_2)= \psi((a_1,b_1)(a_2,b_2)).$$
So, $\psi$ is a homomorphism.
Let $\psi(a_1,b_1)=\psi(a_2,b_2)$.
Then $(c_{a_1}\circ d_{b_1})\bv=(c_{a_2}\circ d_{b_2})\bv$ for all $\bv \in \cl$. Hence, by equations~(\ref{eqn:addc}) and~(\ref{eqn:inverse}) we have $c_{a_1-a_2}\circ d_{b_1}(\bv)=d_{b_2}(\bv)$ for all $\bv \in \cl$.
Then, by equations~(\ref{eqn:acil}) 
$$(c_{a_1-a_2}\circ d_{b_1}(\bv))_0=v_{(-a_1+a_2)b_1^{-1}}=d_{b_2}(\bv)_0=v_0$$ for all $\bv \in \cl$.
Hence, $(-a_1+a_2)b_1^{-1}=0$ implying that $a_1=a_2$ and $d_{b_1}(\bv)=d_{b_2}(\bv)$ for all $\bv \in \cl$.
This implies
$d_{b_2^{-1}b_1}(\bv)=\bv$ for all $\bv \in \cl$ 
by equations~(\ref{eqn:multd}) and~(\ref{eqn:inverse}).
Hence $b_2^{-1}b_1=1$ and $b_1=b_2$ as by equation~(\ref{eqn:decimate}), $d_1$ is the only decimation that fixes all the vectors in $\cl$.
We have now proven that $\psi$ is injective.

By
Lemma~\ref{lem:generate} the set of all 
$(j,1)$, $(0,k)$ for  $j \in \zl$, $k \in \zlx$ generate $\sdp$.
Now, the set of all $\psi(j,1)=c_j$ and $\psi(0,k)=d_k$, and $c_j,d_k$ for $j \in \zl$ and $k \in \zlx$ generate $D$, $\psi$ is an injective homomorphism mapping a set of generators of $\sdp$ to a set of generators of $D$.
Hence, $\psi$ is onto $D$ implying that $\psi$ is an isomorphism.

To prove (ii), first the projection map $\pi_1:\langle (i,1) \mid (i,1)\in \sdp, i \in \zl
\rangle \rightarrow \zl$ such that $\pi_1 (i,1)=i$ is clearly one-to-one
and onto. Since for $i_1,i_2\in \zl$, $(i_1,1)(i_2,1)=(i_1+i_2,1)$,
we have
$$\pi_1((i_1,1)(i_2,1))=\pi_1(i_1+i_2,1)=i_1+i_2=\pi_1(i_1,1)\pi_1(i_2,1).$$
So, $\pi_1$ is an isomorphism and
$\langle (i,1) \mid (i,1)\in \sdp, i \in \zl
\rangle\cong  \zl.$ Observe that 
$\psi(\langle (i,1) \mid (i,1)\in \sdp, i \in \zl
\rangle)= \langle c_i \mid i \in \zl
\rangle$,
and we get $\langle (i,1) \mid (i,1)\in \sdp, i \in \zl \rangle
\cong  \langle c_i \mid i \in \zl
\rangle$  since $\psi$ is an isomorphism.

To prove (iii), first the projection map $\pi_2: \langle (0, k) \mid (0, k) \in \sdp, k\in \zlx \rangle \rightarrow \zlx$ such that $\pi_2 (0,k)=k$ 
is clearly one-to-one
and onto.  Since for $k_1,k_2\in \zlx$, $(0,k_1)(0,k_2)=(0,k_1k_2)$,
we have
$$\pi_2((0,k_1)(0,k_2))=\pi_2(0,k_1k_2)=k_1k_2=\pi_2(0,k_1)\pi_2(0,k_2).$$
So, $\pi_2$ is an isomorphism and
$\langle (0,k) \mid (0,k)\in \sdp, k \in \zl
\rangle\cong  \zlx.$
Observe that 
$\psi(\langle (0,k) \mid (0,k)\in \sdp, k \in \zlx
\rangle)= \langle d_k \mid k \in \zlx
\rangle$,
and we get $\langle (0,k) \mid (0,k)\in \sdp, k \in \zlx
\rangle \cong  \langle d_k \mid k \in \zlx \rangle$  since $\psi$ is an isomorphism.
\end{proof}

The next corollary shows that the group $D$ acts on $\cl$. Hence, decimation classes in  $ \mathbb{C}^{\zl}$, being the orbits of vectors in $ \mathbb{C}^{\zl}$ are indeed equivalence classes. 
\begin{corollary}
    The group $D$ generated by all cyclic shifts and all decimations with the group operation function composition acts on $\cl$.
\end{corollary}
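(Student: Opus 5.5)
The plan is to obtain the corollary directly from Proposition~\ref{thm:isomorphic}(i) and Lemma~\ref{lem:Danc}. Proposition~\ref{thm:isomorphic}(i) gives an isomorphism $\psi:\sdp\rightarrow D$ defined by $\psi(a,b)=c_a\circ d_b$. I would apply Lemma~\ref{lem:Danc} with $T=D$ and this $\psi$. The lemma then gives an action of $D$ on $\cl$ in which $t\in D$ with $\psi^{-1}(t)=(a,b)$ sends $\bv$ to $(c_a\circ d_b)(\bv)$.

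The only point to check is that this induced action is the natural one, namely evaluation of the function $t$ at $\bv$. This holds because $t=\psi(\psi^{-1}(t))=\psi(a,b)=c_a\circ d_b$. Hence $t\bv=(c_a\circ d_b)(\bv)=t(\bv)$, so the action is just function application.

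For completeness, I would note that the two action axioms can also be verified directly, since $D$ is a group of maps under composition. The identity of $D$ is the identity map, so it fixes every $\bv$. Compatibility holds because $(t_1\circ t_2)(\bv)=t_1(t_2(\bv))$ by the definition of composition.

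There is no real obstacle here. The only subtle step is confirming that the action produced by Lemma~\ref{lem:Danc} coincides with evaluation of the maps in $D$, and this is exactly what surjectivity of $\psi$ from Proposition~\ref{thm:isomorphic}(i) provides. Once this is established, it follows that decimation classes, being orbits of this action, are equivalence classes.
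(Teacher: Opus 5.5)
Your proposal is correct and matches the paper's proof: the paper likewise takes the isomorphism $\psi:\sdp\rightarrow D$, $\psi(a,b)=c_a\circ d_b$, from Proposition~\ref{thm:isomorphic}(i) and applies Lemma~\ref{lem:Danc} with $T=D$. Your check that $t\bv=\psi(a,b)(\bv)=t(\bv)$, so the induced action is just evaluation, makes explicit something the paper leaves implicit. Your direct verification via $(t_1\circ t_2)(\bv)=t_1(t_2(\bv))$, with the identity of $D$ being $c_0=\mathrm{id}$, already proves the corollary on its own without the isomorphism.
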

\begin{proof}
Let  $\psi$ be as in Proposition~\ref{thm:isomorphic}. 
Then  since by proposition~\ref{thm:isomorphic} $\psi:\sdp\rightarrow D$ is an isomorphism,  
by Lemma~\ref{lem:Danc}, $D$ acts on $\cl$.  
\end{proof}
Our next goal is to show that for pairs of vectors in $\mathbb{C}^{\zl}\times\mathbb{C}^{\zl}$ being equivalent 
is an equivalence relation. To accomplish this wee need to better understand how cyclic shifts and decimations are related in $D$. In particular, 
cyclic shifts and decimations do not always commute. The following lemma provides two general relations between cyclic shifts and decimations.
\begin{lemma}\label{lem:relations}
For  \(a \in\zl\) and \(b\in\zlx\), 
$$c_a\circ d_b=d_b\circ c_{ab^{-1}} \text{ and } d_b\circ c_a=c_{ab}\circ d_b.$$
\end{lemma}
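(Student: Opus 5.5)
The plan is to verify both identities entrywise, using the formula in equation~(\ref{eqn:acil}) for $c_a\circ d_b$ together with the definitions of $c_j$ and $d_k$. Since two linear maps on $\cl$ are equal exactly when they agree on every $\bv\in\cl$ in every coordinate $i\in\zl$, it suffices to compare $i$-th entries.

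For the first identity, equation~(\ref{eqn:acil}) gives $((c_a\circ d_b)(\bv))_i=v_{(i-a)b^{-1}}$. For the right-hand side I first apply $c_{ab^{-1}}$, giving $(c_{ab^{-1}}(\bv))_j=v_{j-ab^{-1}}$. Decimating by $b$ then replaces the index $j$ by $ib^{-1}$, so
\[
((d_b\circ c_{ab^{-1}})(\bv))_i=v_{ib^{-1}-ab^{-1}}=v_{(i-a)b^{-1}}.
\]
This matches the left side for all $i$ and all $\bv$, so $c_a\circ d_b=d_b\circ c_{ab^{-1}}$.

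For the second identity I will not compute again. Instead I apply the first identity with $a$ replaced by $ab$, which gives $c_{ab}\circ d_b=d_b\circ c_{ab\,b^{-1}}=d_b\circ c_a$. As an alternative check, equation~(\ref{eqn:hom}) gives the same result: writing $d_b=c_0\circ d_b$ and $c_a=c_a\circ d_1$, their composition is $c_{0+ba}\circ d_{b}$.

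There is no real obstacle here. The only point needing care is the order in which the index substitutions are made when composing. The inner map acts first on $\bv$, but in the outer map's formula the outer map's index substitution is applied to $i$ first. I will handle this by writing out the intermediate vector explicitly, as done above.
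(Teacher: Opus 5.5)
Your proof is correct, but it takes a different route from the paper's.

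You verify $c_a\circ d_b=d_b\circ c_{ab^{-1}}$ directly on coordinates, using only the definitions and equation~(\ref{eqn:acil}). You then get the second identity by the substitution $a\mapsto ab$.

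The paper instead works inside $\sdp$. It uses the map $\psi(a,b)=c_a\circ d_b$ from Proposition~\ref{thm:isomorphic}, notes that $\psi(a,1)=c_a$ and $\psi(0,b)=d_b$, and checks two products with the rule $(a_1,b_1)(a_2,b_2)=(a_1+b_1a_2,b_1b_2)$:
\begin{itemize}
\item $(a,1)(0,b)=(a,b)=(0,b)(ab^{-1},1)$;
\item $(0,b)(a,1)=(ab,b)=(ab,1)(0,b)$.
\end{itemize}
It then transfers both identities back through $\psi$.

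Your argument is more elementary and self-contained, since it depends on nothing beyond the definitions. The paper's argument turns each identity into a one-line multiplication in the abstract group. This fits its structural theme: the relation $d_b\circ c_a\circ d_b^{-1}=c_{ab}$ is visibly the automorphism $\theta_b$.

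The paper's argument only needs $\psi$ to be a homomorphism, which is exactly equation~(\ref{eqn:hom}). So your \emph{alternative check} via (\ref{eqn:hom}) is essentially the paper's proof without the semidirect-product bookkeeping.
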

\begin{proof}
Let $\psi$ be the isomorphism in the proof of Proposition~\ref{thm:isomorphic}.
  Since 
  $\psi(a,1)=c_a$ and $\psi(0,b)=d_b$ for $a \in \zl$ and $b \in \zlx$, it suffices to prove 
   $$
   (a,1) (0,b)=(0,b) (ab^{-1},1) \text{ and } (0,b) (a,1)=(ab,1) (0,b) \,\,\,
 $$  for $(a,1), (0,b) \in \sdp$.
 Recall that for $(a_1,b_1), (a_2,b_2) \in \sdp$, $$(a_1,b_1)(a_2,b_2)=(a_1+b_1a_2,b_1b_2).$$
 Then by taking $(a_1,b_1)=(a,1)$ and $(a_2,b_2)=(0,b)$, we get
  $$
   (a,b)=(a,1)(0,b)=(0,b) (ab^{-1},1) \text{ and } (ab,b)=(0,b) (a,1)=(ab,1) (0,b).$$ 
\end{proof}
 The relation in Lemma~\ref{lem:relations} is equivalent to \(d_b\circ c_a\circ(d_b)^{-1}=d_b\circ c_a\circ d_{b^{-1}}=c_{ab}\). Hence, more generally, for \(g=c_{a'}\circ d_b\) and   \(a',a,b\in\zl\), we have 
 \begin{equation*}\label{eqn:gcaginv}
  g\circ c_a\circ g^{-1}=c_{ab}.   
 \end{equation*}
 For the rest of the paper, to simplify the notation, we do not use the ``$\circ$'' notation for compositions of cyclic shifts, decimations, switches, and any other functions. Instead, we use juxtaposition. So,  $f_1\circ f_2=f_1f_2$ for any two functions $f_1,f_2$. Since we do not multiply functions in this paper, this should not cause any confusion.

Let $(d_k,(-1)^r)\in \langle d_k \mid k\in \zlx \rangle \times \langle -1 \rangle$, $s^f \in \langle s \rangle$, $(c_i,c_j)\in \langle c_1 \rangle \times \langle c_1 \rangle$.
Then, each of the groups $\langle d_k \mid k\in \zlx \rangle \times \langle -1 \rangle$, $\langle s \rangle\cong  \langle -1 \rangle$, and 
 $ \langle c_1 \rangle \times \langle c_1 \rangle \cong \zl \times \zl$
acts on pairs of vectors $(\bu,\bv)$ such that $\bu, \bv \in \mathbb{C}^{\zl}$ 
according to 
\begin{align*}
 (d_k,(-1)^r)(\bu,\bv)&=(d_k(\bu),d_{(-1)^rk}(\bv)),\\
 (c_i,c_j)(\bu,\bv)&=(c_i(\bu),c_j(\bv)),\\
 s(\bu,\bv)&=(\bv,\bu).
\end{align*}
To see this, we first observe that
\begin{align*}
 (d_1,1)(\bu,\bv)&=(d_1(\bu),d_1(\bv))=(\bu,\bv),\\
 (c_0,c_0)(\bu,\bv)&=(c_0(\bu),c_0(\bv))=(\bu,\bv),\\
 s^0(\bu,\bv)&=(\bu,\bv).
\end{align*}
Moreover, for  $\bu, \bv \in \mathbb{C}^{\zl}$, $k_1,k_2\in \zlx$, $r_1,r_2\in \{0,1\}$, $i_1,i_2,j_1,j_2\in \zl$,
 $f_1,f_2 \in \{0,1\}$,
\begin{align*}
(d_{k_1},(-1)^{r_1})\left((d_{k_2},(-1)^{r_2})(\bu,\bv)\right)&=(d_{k_1k_2}(\bu),d_{(-1)^{r_1+r_2}k}(\bv)),\\
 (c_{i_1},c_{j_1})\left((c_{i_2},c_{j_2})(\bu,\bv)\right)&=(c_{i_1+i_2}(\bu),c_{j_1+j_2}(\bv)),\\
 s^{f_2}\left(s^{f_1}(\bu,\bv)\right)&=s^{f_1+f_2}(\bu,\bv),
\end{align*}
and
\begin{align*}
(d_{k_1},(-1)^{r_1})(d_{k_2},(-1)^{r_2})&=(d_{k_1k_2},d_{(-1)^{r_1+r_2}k}),\\
 (c_{i_1},c_{j_1})(c_{i_2},c_{j_2})&=(c_{i_1+i_2},c_{j_1+j_2}),\\
 s^{f_2}s^{f_1}&=s^{f_1+f_2}.
\end{align*}
So,
\begin{align*}
\left((d_{k_1},(-1)^{r_1})(d_{k_2},(-1)^{r_2})\right)(\bu,\bv)&=(d_{k_1k_2}(\bu),d_{(-1)^{r_1+r_2}k}(\bv)),\\
 \left((c_{i_1},c_{j_1})(c_{i_2},c_{j_2})\right)(\bu,\bv)&=(c_{i_1+i_2}(\bu),c_{j_1+j_2}(\bv)),\\
 \left(s^{f_2}s^{f_1}\right)(\bu,\bv)&=s^{f_1+f_2}(\bu,\bv).
\end{align*}
The functions $(d_k,(-1)^r)\in \langle d_k \mid k\in \zlx \rangle \times \langle -1 \rangle$, $s^f \in \langle s \rangle$, $(c_i,c_j)\in \langle c_1 \rangle \times \langle c_1 \rangle$ from $\mathbb{C}^{\zl}\times\mathbb{C}^{\zl}$ to $\mathbb{C}^{\zl}\times\mathbb{C}^{\zl}$
generate a group $GG$ under function composition. We are going to use the symbol $GG$ for this group hereafter.
In $GG$ 
we observe
\begin{equation*}\label{eqn:scc}
    s(c_i,c_j)(\bu,\bv)=(c_j,c_i)s(\bu,\bv) \text{\, and\, } s(d_k,(-1)^r)(\bu,\bv)= (d_{(-1)^rk},(-1)^r)s(\bu,\bv)
\end{equation*}
for all $(\bu,\bv) \in \mathbb{C}^{\zl}\times\mathbb{C}^{\zl}$.
Hence,
\begin{equation}\label{eqn:scc1}
    s(c_i,c_j)=(c_j,c_i)s,
\end{equation}
and
\begin{equation}\label{eqn:scc2}
 s(d_k,(-1)^r)= (d_{(-1)^rk},(-1)^r)s.
\end{equation}

Since by Lemma~\ref{lem:relations}
 \(c_a d_b=d_bc_{ab^{-1}}\),  
 and by equation~(\ref{eqn:inverse}) $d_{(-1)^rk^{-1}}=(d_{(-1)^rk})^{-1}$, 
 we get
 \begin{equation}\label{eqn:conjhalf}
\begin{aligned}
(c_i,c_j)(d_{k},(-1)^{r})(\bu,\bv)=&\,(c_i,c_j)(d_k(\bu),d_{(-1)^rk}(\bv))=(c_i(d_k(\bu)),c_j(d_{(-1)^rk}(\bv)))\\
=&\,(d_kc_{ik^{-1}}(\bu), d_{(-1)^rk}(c_{j(-1)^rk^{-1}}(\bv))).
\end{aligned}
\end{equation}
Now, observe that 
 \begin{equation}
\begin{aligned}\label{eqn:dkminvvv}
(d_{k^{-1}},(-1)^{r})(d_{k},(-1)^{r})(\bu,\bv)=&\,(d_{k^{-1}},(-1)^{r})(d_k(\bu),d_{(-1)^rk}(\bv))\\
=&\,(d_{k^{-1}k}(\bu),d_{(-1)^{2r}k^{-1}k}(\bv))=(\bu,\bv)
\end{aligned}
\end{equation}
for all $(\bu,\bv) \in \mathbb{C}^{\zl}\times  \mathbb{C}^{\zl}$.
Hence, $(d_{k},(-1)^{r})^{-1}=(d_{k^{-1}},(-1)^{r})$.
Moreover, by equations~(\ref{eqn:conjhalf}) and~(\ref{eqn:dkminvvv}) 
 \begin{equation*}\label{eqn:conjugatedk}
\begin{aligned}
& (d_{k},(-1)^{r})^{-1}(c_i,c_j)(d_{k},(-1)^{r})(\bu,\bv)\\=&\,(d_{k^{-1}},(-1)^{r})(d_kc_{ik^{-1}}(\bu), d_{(-1)^rk}(c_{j(-1)^rk^{-1}}(\bv)))\\
=&\,(d_{{k^{-1}}k}c_{ik^{-1}}(\bu), (d_{k^{-1}(-1)^{2r}k}(c_{j(-1)^rk^{-1}}(\bv)))\\
=&\,(c_{ik^{-1}}(\bu), c_{j(-1)^rk^{-1}}(\bv)).
\end{aligned}
\end{equation*}
Hence,
 \begin{equation}\label{eqn:conjfinal}
 (d_{k},(-1)^{r})^{-1}(c_i,c_j)(d_{k},(-1)^{r})=(c_{ik^{-1}}, c_{j(-1)^rk^{-1}}).
\end{equation}
Now, $GG$ acts on  paired vectors $(\bu,\bv)$
 with $\bu,\bv\in \mathbb{C}^{\zl}$ because the relations among the generators of $GG$
 are completely determined by how the elements of the generating subgroups  $\langle d_k \mid k\in \zlx \rangle \times \langle -1 \rangle$, $\langle s \rangle$, and 
 $ \langle c_1 \rangle \times \langle c_1 \rangle$ act on paired vectors.
 This is true because the group operation in $GG$ is function composition.
 
The next lemma defines the conjugation automorphism $\sigma_k$. This is different  from how $\sigma_k$ was defined earlier. It will be needed in proving  a subsequent proposition that generalizes Proposition~\ref{thm:isomorphic}. The generalization of Proposition~\ref{thm:isomorphic} will be needed to determine the structure of $GG$.
\begin{lemma}\label{lem:nataut}
For a group $G$ and a normal subgroup $K$ ($K\mathrel{\unlhd}G$) and $Q\leq G$, let
$\sigma_q:G\rightarrow G$ be
 such that $\sigma_q(g)=qgq^{-1}$ for each $g \in G$ and a fixed $q \in Q$. Then
 \begin{itemize}
 \item[(i)]
 $\sigma_q\in \aut(G)$ for each fixed $q \in Q$;
 \item[(ii)]$\theta:Q\rightarrow \aut(K)$ such that
 $\theta(q)=\sigma_q$ for $ q \in Q$ is a homomorphism.
 \end{itemize}
 \end{lemma}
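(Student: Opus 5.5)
The plan is to verify the two claims directly from the definition of $\sigma_q$ and the group axioms, with normality of $K$ used only in part (ii). For (i), I fix $q\in Q\leq G$ and check two things: that $\sigma_q$ is a homomorphism of $G$, and that it is a bijection. The homomorphism property follows from inserting $q^{-1}q=e$:
\[
\sigma_q(g_1g_2)=qg_1g_2q^{-1}=qg_1q^{-1}qg_2q^{-1}=\sigma_q(g_1)\sigma_q(g_2).
\]
For bijectivity I will exhibit an inverse rather than check injectivity and surjectivity separately. Since $\sigma_q\sigma_{q^{-1}}(g)=q q^{-1} g q q^{-1}=g$, and symmetrically $\sigma_{q^{-1}}\sigma_q=id$, we get $\sigma_q^{-1}=\sigma_{q^{-1}}$. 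This mirrors the earlier argument in the paper that $\sigma_k$ is a $\zl$-automorphism.

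For (ii), the first and only real point is that $\theta(q)$ must be read as the restriction $\sigma_q|_K$, and this restriction must actually be an element of $\aut(K)$. Normality $K\mathrel{\unlhd}G$ gives $qKq^{-1}\subseteq K$, so $\sigma_q(K)\subseteq K$. Applying the same fact to $q^{-1}$ gives $\sigma_{q^{-1}}(K)\subseteq K$. Hence $\sigma_q|_K$ is a map $K\to K$ with two-sided inverse $\sigma_{q^{-1}}|_K$. It is a homomorphism because it is the restriction of the homomorphism from (i). Therefore $\sigma_q|_K\in\aut(K)$.

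It then remains to show that $\theta$ respects the group operation. For $q_1,q_2\in Q$ and $k\in K$,
\[
\theta(q_1)\circ\theta(q_2)(k)=q_1(q_2kq_2^{-1})q_1^{-1}=(q_1q_2)k(q_1q_2)^{-1}=\theta(q_1q_2)(k),
\]
using $(q_1q_2)^{-1}=q_2^{-1}q_1^{-1}$. Hence $\theta(q_1q_2)=\theta(q_1)\circ\theta(q_2)$, and $\theta$ is a homomorphism. Here $\aut(K)$ carries composition as its operation, matching the convention used for $\theta:\zlx\rightarrow\aut(\zl)$ earlier.

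There is no serious obstacle in this argument. The only step needing care is the well-definedness in (ii): the statement defines $\sigma_q$ on $G$, while $\theta$ lands in $\aut(K)$. I would state explicitly that $\theta(q)$ means the restriction to $K$, and that normality is what makes the restriction both land in $K$ and be surjective onto $K$.
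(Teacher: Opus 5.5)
Your proof is correct and follows essentially the same route as the paper: in (i) you exhibit $\sigma_{q^{-1}}$ as the inverse and insert $q^{-1}q$ to get the homomorphism property, and in (ii) you restrict to $K$ (justified by normality) and verify $\theta(q_1)\circ\theta(q_2)=\theta(q_1q_2)$. Your step applying normality to $q^{-1}$ to get surjectivity of $\sigma_q|_K$ simply spells out the paper's one-line assertion that $\sigma_q(K)=K$.
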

 \begin{proof}
    To see (i), observe that for each fixed $q \in Q$ and each $g \in G$,  
    $$\sigma_{q^{-1}}(\sigma_q)(g)=q^{-1}qgq^{-1}(q^{-1})^{-1}=g$$
    and 
     $$\sigma_q(\sigma_{q^{-1}})(g)=qq^{-1}g(q^{-1})^{-1}(q)^{-1}=g.$$
     Hence, $\sigma_q$ is invertible. Moreover, for $g_1,g_2\in G$,
     $$\sigma_q(g_1g_2)=q(g_1g_2)q^{-1}=q(g_1)q^{-1}q(g_2)q^{-1}= \sigma_q(g_1)\sigma_q(g_2)$$
     implying that $\sigma_q$ is an automorphism of $G$.
     To prove (ii), let $\theta(q) = \sigma_q \arrowvert_K$ ($\sigma_q$ restricted to $K$). By the normality of $K$, note that
$\sigma_q(K) = K$ so that $\theta(q)$ is well defined and an automorphism on $K$. We
next show $\theta$ is a homomorphism.
     For $q_1,q_2\in Q$, observe that
     $$(\theta(q_1)\circ\theta(q_2))(k)=(\sigma_{q_1}\circ\sigma_{q_2})(k)=
     \sigma_{q_1}(q_2kq_2^{-1})=q_1q_2kq_2^{-1}q_1^{-1}=\sigma_{q_1q_2}(k)=\theta(q_1q_2)(k).$$   
 \end{proof}
The next proposition  generalizes Proposition~\ref{thm:isomorphic}.
\begin{theorem}\label{thm:semidirect}
For a  group $G$ with identity element $id$, let $K\mathrel{\unlhd}G$, $Q\leq G$, and $\langle K,Q\rangle=G$. 
 Let $\theta:Q\rightarrow \aut(K)$ be defined by $\theta(q) =\sigma_q \arrowvert_K$
and $\sigma_q(k) = q k q^{-1}$. Then 
 \begin{itemize}
     \item[(i)]  $KQ=\langle K,Q\rangle=G$;
     \item[(ii)] $KQ=QK$;
     \item[(iii)] if $G$ is finitely generated and  $K\cap Q =\{id\}$, then $G \cong K\rtimes_{\theta} Q$.
 \end{itemize}
\end{theorem}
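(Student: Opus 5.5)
I will prove (ii) first, then (i), then (iii). Here $KQ=\{kq\mid k\in K,\ q\in Q\}$.

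For (ii), take $kq\in KQ$ and write $kq=q(q^{-1}kq)$. By normality of $K$ we have $q^{-1}kq\in K$. We would like this element to lie in $QK$, and it does: $q\in Q$ and $q^{-1}kq\in K$. Hence $KQ\subseteq QK$. Symmetrically, $qk=(qkq^{-1})q=\sigma_q(k)q\in KQ$, so $QK\subseteq KQ$ and $KQ=QK$.

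For (i), the set $KQ$ lies in $\langle K,Q\rangle$, so it suffices to show that $KQ$ is a subgroup containing $K$ and $Q$. Since $id\in K\cap Q$, we get $K=K\,id\subseteq KQ$ and $Q=id\,Q\subseteq KQ$. Closure under products uses (ii). For $k_1q_1,k_2q_2\in KQ$, we have $q_1k_2=\sigma_{q_1}(k_2)q_1$, so
\[(k_1q_1)(k_2q_2)=k_1\sigma_{q_1}(k_2)\,q_1q_2\in KQ.\]
For inverses, $(kq)^{-1}=q^{-1}k^{-1}\in QK=KQ$. Thus $KQ$ is a subgroup containing $K$ and $Q$, so $\langle K,Q\rangle\subseteq KQ$, and therefore $KQ=\langle K,Q\rangle=G$.

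For (iii), I will define $\phi:K\rtimes_\theta Q\to G$ by $\phi(k,q)=kq$; this is modeled on the map $\psi$ in Proposition~\ref{thm:isomorphic}. By Lemma~\ref{lem:nataut}, $\theta$ is a homomorphism into $\aut(K)$, so $K\rtimes_\theta Q$ is a group. The homomorphism property follows from the computation above:
\[\phi\big((k_1,q_1)(k_2,q_2)\big)=\phi\big(k_1\,q_1k_2q_1^{-1},\,q_1q_2\big)=k_1q_1k_2q_2=\phi(k_1,q_1)\,\phi(k_2,q_2).\]
Surjectivity is exactly part (i). For injectivity, suppose $k_1q_1=k_2q_2$. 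Then $k_2^{-1}k_1=q_2q_1^{-1}\in K\cap Q=\{id\}$, which gives $k_1=k_2$ and $q_1=q_2$.

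The finite generation hypothesis is not actually needed in this argument. If one wants to mirror the proof of Proposition~\ref{thm:isomorphic}, one can instead invoke Lemma~\ref{lem:generate}: the elements $(k,id)$ and $(id,q)$ generate the semidirect product and are sent to generators of $G$. The only step needing real care is the closure computation in (i), which relies on normality through $qk=\sigma_q(k)q$; once that is in place, the rest is routine verification.
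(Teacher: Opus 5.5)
Your proof is correct and has the same skeleton as the paper's. You show $KQ$ is a subgroup containing $K$ and $Q$, then check that $(k,q)\mapsto kq$ is a homomorphism, injective because $k_2^{-1}k_1=q_2q_1^{-1}\in K\cap Q$. The differences are local, but one of them buys you a stronger statement.

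For (ii), the paper repeats the whole subgroup argument (closure and inverses) for $QK$ to conclude $QK=\langle K,Q\rangle=KQ$. You get both inclusions at once from $kq=q(q^{-1}kq)$ and $qk=(qkq^{-1})q$. Correspondingly, you use (ii) for inverse-closure in (i), whereas the paper handles inverses in (i) directly via $(ab)^{-1}=(b^{-1}a^{-1}b)b^{-1}$, so its (i) does not depend on (ii). Both orderings are fine; yours is shorter.

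For surjectivity in (iii), the paper mirrors Proposition~\ref{thm:isomorphic}: it invokes Lemma~\ref{lem:generate} and notes that the image contains a generating set of $G$. You read surjectivity off directly from $G=KQ$ in part (i). As you observe, this proves the statement without the finite-generation hypothesis. The paper's route does not genuinely need that hypothesis either. It applies Lemma~\ref{lem:generate} with the possibly infinite generating sets $K$ and $Q$, and that lemma's proof never uses finiteness of the generating sets.
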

\begin{proof}
To prove part (i), let  $(a_1,b_1), (a_2,b_2) \in  K \times Q$.
Since  $K\mathrel{\unlhd}G$, $$(a_1b_1)(a_2b_2)=a_1b_1a_2b_1^{-1}b_1b_2=a_1a_2'b_1b_2,$$
 where $b_1a_2b_1^{-1}=a_2'\in K$. Hence, $KQ$ is closed under the group operation of $G$.
Moreover, for  $(a,b) \in  K \times Q$ we have 
$$(ab)^{-1}=b^{-1}a^{-1}=b^{-1}a^{-1}bb^{-1}=a''b^{-1},$$ where 
$b^{-1}a^{-1}b=b^{-1}a^{-1}(b^{-1})^{-1}=a''\in K$ as $K\mathrel{\unlhd}G$. Hence, 
$(ab)^{-1} \in KQ$. This shows that the set $KQ$ is a group under the group operation of $G$.
It is also clear that $K\leq KQ$ and  $Q\leq KQ$. 
Hence, $ \langle K,Q \rangle \leq KQ $. On the other hand  $\langle K,Q \rangle$ contains every element of the form $ab$ for $(a,b)\in K \times Q$. Hence,  $G =\langle K,Q \rangle = KQ. $

The proof of part (ii) mimics the  proof of part (i).
Let  $(b_1,a_1), (b_2,a_2) \in  Q \times K$.
Then,   since  $K\mathrel{\unlhd}G$, $$(b_1a_1)(b_2a_2)=b_1b_2b_2^{-1}a_1b_2a_2=b_1b_2a_1'a_2,$$
 where $b_2^{-1}a_1(b_2^{-1})^{-1}=b_2^{-1}a_1b_2=a_1'\in K$. Hence, $QK$ is closed under the group operation of $G$.
Moreover, for  $(b,a) \in  Q \times K$, we have 
$$(ba)^{-1}=a^{-1}b^{-1}=a^{-1}b^{-1}aa^{-1}=a''b^{-1},$$ where
$a^{-1}b^{-1}a=a^{-1}b^{-1}(a^{-1})^{-1}=a''\in K$ as $K\mathrel{\unlhd}G$. Hence, 
$(ba)^{-1} \in QK$. This shows that the set $QK$ is a group under the group operation of $G$.
It is also clear that $K\leq QK$ and  $Q\leq QK$. 
Hence, $ \langle K,Q \rangle \leq QK $. On the other hand  $\langle K,Q \rangle$ contains every element of the form $ba$ for $(b,a)\in Q \times K$. Hence,  $G =\langle K,Q \rangle = QK. $

The proof of part (iii) mimics the  part (i) proof of Proposition~\ref{thm:isomorphic}.
For $(a,b)\in \sdpq$ let $\psi: \sdpq \rightarrow KQ=G$ be such that
$$\psi(a,b)=ab.$$
Then for $(a_1,b_1), (a_2,b_2) \in  \sdpq$, 
\begin{align*}
\begin{split}
\psi(a_1,b_1)\psi(a_2,b_2)=&\,\,(a_1b_1)(a_2b_2)=a_1b_1a_2b_1^{-1}b_1b_2 \\ =&\,\, a_1\sigma_{b_1}(a_2)b_1b_2 = a_1\theta(b_1)(a_2)b_1b_2\\
=&\,\, \psi((a_1,b_1)(a_2,b_2)).
\end{split}
\end{align*}
Hence,
$$\psi(a_1,b_1)\psi(a_2,b_2)= \psi((a_1,b_1)(a_2,b_2)).$$
So, $\psi$ is a homomorphism.

Let $\psi(a_1,b_1)=\psi(a_2,b_2)$.
Then $$a_1 b_1=a_2 b_2
\implies a_2^{-1}a_1=b_2b_1^{-1}\in  K\cap Q=\{id\}.$$
Hence, $a_1=a_2$ and $b_1=b_2$ implying that $\psi$ is injective. 

By
Lemma~\ref{lem:generate} the set of all 
$(j,id)$, $(id,k)$ for  $j \in K$, $k \in Q$ generates $\sdpq$.
Now, $\psi(j,id)=j$ and $\psi(id,k)=k$ and $j,k$ generate $G=KQ$ for $j \in K$ and $k \in Q$. Then $\psi$ is an injective homomorphism mapping a set of generators of $\sdpq$ to a set of generators of $G=KQ$.
Hence, $\psi$ is onto $G=KQ$ implying that $\psi$ is an isomorphism.
\end{proof}
Hereafter, for a semidirect product where $\theta$ is defined by $\theta(q) = \sigma_q \arrowvert_K$ and $\sigma_q(k) = q k q^{-1}$, we are going to use ``$\rtimes$" instead of ``$\rtimes_{\theta}$".

We recall the definition of $GG$ and determine the structure of $GG$ in the next proposition. 
\begin{theorem}\label{thm:new}
Let $id$ be the identity element in $GG$. Let
    $G_1=\langle c_1\rangle \times \langle c_1\rangle,$
$G_2= \left(\langle d_k \mid k\in \zlx \rangle \times \langle -1 \rangle\right),$
and 
$G_3=\langle s \rangle.$ Let $GG = \langle G_1, G_2, G_3 \rangle$. 
Then
 \begin{align*}
GG\cong &\,\, G_1\rtimes\left(G_2\rtimes G_3 \right).
\end{align*} 
 \end{theorem}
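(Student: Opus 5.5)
The plan is to apply Proposition~\ref{thm:semidirect}(iii) twice: first inside the subgroup $H=\langle G_2,G_3\rangle$ to get $H\cong G_2\rtimes G_3$, then in $GG=\langle G_1,H\rangle$ (using $\langle G_1,G_2,G_3\rangle=\langle G_1,\langle G_2,G_3\rangle\rangle$ from the introduction) to get $GG\cong G_1\rtimes H$. Composing the isomorphisms gives $GG\cong G_1\rtimes(G_2\rtimes G_3)$, with the actions given by conjugation as in the convention after Proposition~\ref{thm:semidirect}. All groups involved are finite, hence finitely generated, so only normality and trivial intersection need checking.

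\emph{Step 1: $H\cong G_2\rtimes G_3$.} By equation~(\ref{eqn:scc2}), $s(d_k,(-1)^r)s^{-1}=(d_{(-1)^rk},(-1)^r)\in G_2$, since $s=s^{-1}$. Thus $G_3$ normalizes $G_2$. Trivially $G_2$ normalizes itself, so $G_2\unlhd\langle G_2,G_3\rangle=H$. For trivial intersection, suppose $s=(d_k,(-1)^r)$ as maps. Apply both sides to $(\bu,\bzero)$ with $\bu\neq\bzero$: the left side gives $(\bzero,\bu)$ and the right side gives $(d_k(\bu),\bzero)$, which is a contradiction. Hence $G_2\cap G_3=\{id\}$.

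\emph{Step 2: $GG\cong G_1\rtimes H$.} Equation~(\ref{eqn:conjfinal}) shows that conjugating $(c_i,c_j)$ by any element of $G_2$ (and its inverse) lands in $G_1$. Equation~(\ref{eqn:scc1}) gives $s(c_i,c_j)s^{-1}=(c_j,c_i)\in G_1$. Since $G_1$ is abelian and the conjugates of elements of $G_1$ by the generators of $GG$ lie in $G_1$, and $GG$ is finite so inverses of generators are powers of them, $G_1\unlhd GG$.

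For $G_1\cap H=\{id\}$, first use Step~1 (Proposition~\ref{thm:semidirect}(i)) to write each $h\in H$ as $h=(d_k,(-1)^r)s^f$. Suppose $(c_i,c_j)=(d_k,(-1)^r)s^f$. If $f=1$, the pair $(\bu,\bzero)$ gives the same contradiction as before. If $f=0$, compare coordinates: $c_i=d_k$ and $c_j=d_{\pm k}$. Then Proposition~\ref{thm:isomorphic}(i), specifically the injectivity of $\psi$, forces $i=j=0$ and $k=1$, and $d_{-1}\neq d_1$ when $\ell>2$ forces $r=0$.

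The main obstacle is the bookkeeping around $G_2=\langle d_k\rangle\times\langle -1\rangle$ as a group of maps. When $\ell\le 2$, $d_{-1}=d_1$, so the element $(d_1,-1)$ acts trivially and the map $G_2\to GG$ fails to be injective. I would therefore note that $G_2$ is identified with its image in $GG$ under the stated action, and assume $\ell>2$ (the case of interest, since $\ell$ is odd) so that the formal and functional descriptions agree. Finally, I would verify that the conjugation action $\theta$ in the outer product is exactly the one given by equations~(\ref{eqn:scc1}) and~(\ref{eqn:conjfinal}), so that the statement's ``$\rtimes$'' is justified.
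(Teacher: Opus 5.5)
Your proposal is correct and takes essentially the same route as the paper. Both apply Proposition~\ref{thm:semidirect} twice, get normality from equations~(\ref{eqn:scc1}), (\ref{eqn:scc2}) and (\ref{eqn:conjfinal}), and check the trivial intersections by evaluating on test pairs. The differences are minor: you test with $(\bu,\bzero)$ and use the injectivity of $\psi$, where the paper uses pairs with $u_0\neq v_0$ and $\bu=\bv=(-1,1,\ldots,1)$. Your remark that $G_2$ must be read as its image in $GG$ (the map into $GG$ is faithful only for $\ell>2$) is a valid point that the paper leaves implicit.
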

\begin{proof}    
We first show that   $G_2\cap G_3 =\{id\}$ in $GG$.
Let $g\in G_2\cap G_3$. Then $$g=s^f=(d_k,(-1)^{r})$$ for some $f,r \in \{0,1\}$ and $k \in \zlx$.
Then $$g(\bu,\bv)=s^f(\bu,\bv)=(d_k,(-1)^{r})(\bu,\bv)$$ for all $(\bu,\bv)\in \mathbb{C}^{\zl}\times  \mathbb{C}^{\zl}$. However,  for $(\bx,\by)=(d_k,(-1)^{r})(\bu,\bv)$, $x_0=u_0$ and $y_0=v_0$ for all $k \in \zlx$ and 
$r \in \{0,1\}$. 
Whereas for $(\bx,\by)=s^f(\bu,\bv)$, $x_0=u_0$, and $y_0=v_0$ imply $f=0$ by picking a $(\bu,\bv)$ such that $u_0 \neq v_0$. Hence $g = s^0 = id$ in $GG$ and
$G_2\cap G_3 =\{id\}$ in $GG$.
Next, we observe  that by equation~(\ref{eqn:scc2}), $G_2 \mathrel{\unlhd} \langle G_2,G_3\rangle $.
Hence, since  $\langle G_2,G_3\rangle$ is finitely generated, by Proposition~\ref{thm:semidirect} parts (i) and (iii),  $G_2G_3= \langle G_2,G_3\rangle \cong G_2\rtimes G_3$.
Now, we show that   $$G_1\cap \langle G_2,G_3\rangle  =\{id\}.$$ Let $g \in G_1\cap \langle G_2,G_3\rangle$.
Then $$g=(c_i,c_j)=(d_k,(-1)^{r})s^f$$ for some $i,j \in \zl$, $k \in \zlx$, and $r,f \in \{0,1\}.$ 
Then $$g(\bu,\bv)=(c_i,c_j)(\bu,\bv)=(d_k,(-1)^{r})(s^f(\bu,\bv))$$ for all $(\bu,\bv)\in \mathbb{C}^{\zl}\times  \mathbb{C}^{\zl}$. However,  if $(\bx,\by)=(d_k,(-1)^{r})(s^f(\bu,\bv))$, then $x_0\in \{u_0,v_0\}$ and $y_0\in \{u_0,v_0\}$. 
Whereas, for $(\bx,\by)=(c_i,c_j)(\bu,\bv)$, $x_0\in \{u_0, v_0\}$ and $y_0\in \{u_0, v_0\}$ imply $i=0$ and $j=0$
after picking $\bu=(-1,1,\ldots,1)$ and $\bv=(-1,1,\ldots,1)$. Hence $g=id$ and
$G_1\cap \langle G_2,G_3\rangle =\{id\}$ in $GG$.
Next, we observe that by equations~(\ref{eqn:scc1}) and~(\ref{eqn:conjfinal})
 $G_1 \mathrel{\unlhd} \langle G_1, \langle G_2,G_3\rangle \rangle= \langle G_1, G_2G_3\rangle $.
 Moreover, $$GG= \langle G_1,  G_2,G_3 \rangle =\langle G_1, \langle G_2,G_3\rangle \rangle= \langle G_1, G_2G_3\rangle.$$
 Hence, since  $ \langle G_1, G_2,G_3 \rangle $ is finitely generated, by Proposition~\ref{thm:semidirect} parts (i) and (iii),  $$GG= \langle G_1, G_2G_3\rangle =G_1G_2G_3\cong G_1\rtimes G_2G_3\cong G_1\rtimes (G_2 \rtimes G_3).$$
Hence, we get $$GG=G_1G_2G_3\cong  G_1\rtimes (G_2 \rtimes G_3).$$ 
 


\end{proof}
The next proposition shows that  equivalence, as defined  by
equations~(\ref{eqn:equiv1}) and~(\ref{eqn:equiv2}),  is indeed an equivalence relation.
\begin{theorem}
Two pairs of vectors  \((\bu,\bv), (\bu',\bv') \in \cl \times \cl\) are {\em equivalent}  as defined  by
equations~(\ref{eqn:equiv1}) and~(\ref{eqn:equiv2})  if 
\((\bu,\bv)\) and \((\bu',\bv')\) are in the same orbit under the action of $GG$.
Hence, equivalence as defined  by equations~(\ref{eqn:equiv1}) and~(\ref{eqn:equiv2}) is indeed an equivalence relation.
\end{theorem}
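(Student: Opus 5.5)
The approach is to show that the set of operations appearing on the right-hand sides of equations~(\ref{eqn:equiv1}) and~(\ref{eqn:equiv2}) is exactly $GG$. Once this holds, ``equivalent'' means ``in the same $GG$-orbit''. Since $GG$ acts on $\cl\times\cl$, being in the same orbit is reflexive, symmetric and transitive, so the second claim follows at once.

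First, each operation in~(\ref{eqn:equiv1}) or~(\ref{eqn:equiv2}) lies in $GG$. For $k\in\zlx$, $i,j\in\zl$, $r,f\in\{0,1\}$, the pair $\bigl(d_k(c_i(\bu)),d_{(-1)^rk}(c_j(\bv))\bigr)$ equals $(d_k,(-1)^r)(c_i,c_j)(\bu,\bv)$. Here $r=0$ gives~(\ref{eqn:equiv1}) and $r=1$ gives~(\ref{eqn:equiv2}). Applying $s^f$ then shows that every such operation is $s^f(d_k,(-1)^r)(c_i,c_j)$, a product of generators of $G_3$, $G_2$ and $G_1$, hence an element of $GG$.

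Second, every element of $GG$ has this form. By Proposition~\ref{thm:new} and its proof, $GG=G_1G_2G_3$. By Proposition~\ref{thm:semidirect}(ii), applied first to $G_2\unlhd\langle G_2,G_3\rangle$ (giving $G_2G_3=G_3G_2$) and then to $G_1\unlhd GG$ (giving $G_1(G_2G_3)=(G_2G_3)G_1$), we get
\[
GG=G_1G_2G_3=G_2G_3G_1=G_3G_2G_1 .
\]
So every $g\in GG$ can be written as $g=s^f(d_k,(-1)^r)(c_i,c_j)$. If one prefers explicit formulas, equations~(\ref{eqn:scc1}), (\ref{eqn:scc2}) and~(\ref{eqn:conjfinal}) move $s$ to the left and $(c_i,c_j)$ to the right, with only the indices changing. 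Therefore $g(\bu,\bv)$ is exactly of the form~(\ref{eqn:equiv1}) or~(\ref{eqn:equiv2}) according as $r=0$ or $r=1$.

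Combining the two steps, $(\bu',\bv')$ is equivalent to $(\bu,\bv)$ iff $(\bu',\bv')=g(\bu,\bv)$ for some $g\in GG$. Reflexivity uses $g=id$, symmetry uses $g^{-1}$, and transitivity uses $g_2g_1$; all three are valid because $GG$ acts on pairs.

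The main obstacle is the second step: the reordering $G_1G_2G_3=G_3G_2G_1$ must be justified carefully from normality. The conjugation formula~(\ref{eqn:conjfinal}) must be applied in the right direction, and the swap in~(\ref{eqn:scc1}) exchanges $i$ and $j$ but stays inside $G_1$. I will check these index changes explicitly.
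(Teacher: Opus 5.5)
Your proposal is correct and follows essentially the same route as the paper. Both reduce to the factorization $GG=G_3G_2G_1$, obtained from Proposition~\ref{thm:semidirect}(ii) using the normality of $G_2$ in $\langle G_2,G_3\rangle$ and of $G_1$ in $GG$, so that every $g\in GG$ has the form $s^f(d_k,(-1)^r)(c_i,c_j)$; you additionally spell out the easy reverse inclusion and the orbit-based check of reflexivity, symmetry and transitivity, which the paper leaves implicit.
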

\begin{proof}
We first use  Proposition~\ref{thm:semidirect} parts (i) and (ii) with $K = G_1$ and $Q=\langle G_2,G_3\rangle = G_2 G_3 = G_3 G_2$. Then we use the  associativity of function composition. Observe 
\begin{equation}\label{eqn:G1G2G3}
\begin{aligned}
GG=\langle G_1, G_2,G_3\rangle
=&\,G_1\langle G_2,G_3\rangle=G_1(G_2G_3)=G_1(G_3G_2)\\ =&\,(G_3G_2)G_1=G_3G_2G_1.
\end{aligned}
\end{equation}

By equations~(\ref{eqn:G1G2G3}), for any $g \in GG$
and a pair of vectors \((\bu,\bv)\) 
\begin{equation}\label{eqn:G1}
g(\bu,\bv)=s^f\Big(d_k\big(c_i(\bu)\big),d_{k}\big(c_j(\bv)\big)\Big)
\end{equation}
or 
\begin{equation}\label{eqn:G2}
g(\bu,\bv)=s^f\Big(d_k\big(c_i(\bu)\big),d_{-k}\big(c_j(\bv)\big)\Big)
\end{equation}
for some \(k\in\zlx\), \(i,j\in\zl\), and \(f\in\{0,1\}\).
Then  by equations~(\ref{eqn:G1}) and~(\ref{eqn:G2}), two pairs of vectors  \((\bu,\bv), (\bu',\bv') \in \cl \times \cl\) are {\em equivalent}  as defined in 
equations~(\ref{eqn:equiv1}) and~(\ref{eqn:equiv2}) if 
\((\bu,\bv)\) and \((\bu',\bv')\) are in the same orbit under the action of $GG$.
\end{proof}
\bmhead{Acknowledgments}

  The views expressed in this article are those of the authors, and do not reflect the official policy or position of the United States Air Force, Department of Defense, or the U.S.~Government. 
  
Data Deposition Information: No data sets have been used.





\bibliography{library}

\end{document}